\newtheorem{theorem}{Theorem}
\newtheorem{definition}{Definition}
\newtheorem{proposition}{Proposition}
\newtheorem{remark}{Remark}
\newcommand\proba[1]{\mathbb{P}\left(#1\right)}
\newcommand\norm[1]{\left\|#1\right\|}
\newcommand\abs[1]{\left|#1\right|}
\newcommand\expect[1]{\mathbb{E}\left[#1\right]}
\newcommand\bbm{\mathbf{m}}
\newcommand\bbx{\mathbf{x}}
\newcommand\bbz{\mathbf{z}}
\newcommand\bX{\mathbf{X}}
\newcommand\calF{\mathcal{F}}
\newcommand\calM{\mathcal{M}}
\newcommand\calE{\mathcal{E}}
\newcommand\calA{\mathcal{A}}
\newcommand\calP{\mathcal{P}}
\newcommand\strat{\mathcal{S}}
\newcommand\Z{\mathbb{Z}}
\newcommand\R{\mathbb{R}}
\newcommand\N{\mathbb{N}}
\DeclareMathOperator*{\argmin}{arg\,min}
\newcommand\cro[1]{\left\langle#1\right\rangle}
\newcommand*{\id}{{\normalfont\hbox{1\kern-0.15em \vrule width .8pt depth-.5pt}}}
\def\namedlabel#1#2{\begingroup
    #2%
    \def\@currentlabel{#2}%
    \phantomsection\label{#1}\endgroup
}
\title{
Discrete Mean Field Games: Existence of Equilibria and Convergence}
\author[ehu]{Josu Doncel}
\ead{josu.doncel@ehu.eus}
\author[uga,inria]{Nicolas Gast}
\ead{nicolas.gast@inria.fr}
\author[uga,inria]{Bruno Gaujal}
\ead{bruno.gaujal@inria.fr}
\address[ehu]{University of the Basque Country UPV/EHU, Spain}
\address[uga]{Univ. Grenoble Alpes, F-38000 Grenoble, France}
\address[inria]{Inria}
\date{}
\begin{document}

\begin{abstract}
  We consider  mean field games with  discrete state spaces (called  discrete mean field games in the following) and we analyze these games in continuous and discrete time, over finite as
  well as infinite time horizons. We prove the existence of a
  mean field equilibrium assuming continuity of the cost and of the drift. These conditions
  are more general than the existing papers studying finite 
  state space mean field games. Besides, we also study
  the convergence of the equilibria of $N$-player games to mean field
  equilibria in our four settings. On the one hand, we define a class  of strategies in which any
  sequence of equilibria  of the finite games converges weakly to a mean field equilibrium when the number of
  players goes to infinity.  On the other hand, we exhibit equilibria outside this
  class that do not converge to mean field equilibria and for which
  the value of the game does not converge.
  In discrete time this non-convergence phenomenon implies that
  the Folk theorem does not scale to the mean field limit.
\end{abstract}

\maketitle

%
%

\section{Introduction}

Mean field games have been introduced by Lasry and Lions \cite{LL07}
as well as Huang, Caines and Malhamé~\cite{Caines} to model
interactions between a large number of strategic agents (players) and
have had a large success ever since.  Since the seminal work in
\cite{LL06,LL06-2,LL07,Caines}, a large variety of papers have been
investigating mean field games. Most of the literature concerns
continuous state spaces and describes a mean field game as a coupling
between a Hamilton-Jacobi-Bellman equation with a Fokker-Planck
equation (see for example
\cite{OLL11,bensoussan2013mean,cardaliaguet2015master,gomes2015regularity,carmona2013probabilistic,gomes2015time,gomes2015time2,gomes2016time,AMBROSE2018141}). Here,
we are interested in studying mean field games with a finite number of
states and finite number of actions per player. In this case, the
analog of the Hamilton-Jacobi-Bellman equation is the {\it Bellman
  equation} and the discrete version of the Fokker-Planck equation is
the {\it Kolmogorov equation}.

Finite state space mean field games in {\it discrete time} ({\it
  a.k.a.} with synchronous players) were previously studied in
\cite{Gomes2010308}. In their work, the strategy of the players is the
probability matrix of the Kolmogorov equation. This implies that each
player can choose her dynamics independently of the state of the
others: the behavior of players is only coupled via their costs. In
that case, the Kolmogorov equation becomes linear.

Finite state space mean field games in {\it continuous time} ({\it
  a.k.a.} with asynchronous players) have also been previously
analyzed in \cite{GMS13,Gueant2014,bayraktar2017analysis,carmona2016finite}. In their
model, the players also control completely the transition rate matrix
so that the dynamics are again linear once the actions of the players
are given. Again, players do not interact with each other directly in
these models, but only through their costs.

The models we study here, both in the synchronous and
asynchronous cases  cover non-linear dynamics:  We  consider  that the players do not
have the power to choose the rate matrix and that their actions only
have a limited effect on their state.  Here, the transition rate matrix
may depend not only on the actions taken by the player, but also on
the population distribution of the system. This introduces an explicit
interaction between the players (and not just through their
costs). This non-linear dynamics is called {\it the relaxed case} in \cite{Fischer}. We claim that the model with explicit interactions covers
several natural phenomena such as information/infection propagation or
resource congestion where the cost but also the state dynamics of a
player depend on the state of the all the others. This type of
behavior is classical in systems with a large number of interacting
objects \cite{benaim2008class} and cannot be handled using previous
mean field game models. For instance, in the classical SIR
(Susceptible, Infected, Recovered) infection model
\cite{wang2016statistical}, the rate of infection of one individual
depends on the proportion of individuals already infected.  Similarly,
in a model of congestion one player cannot typically use a resource if
it is already used to full capacity.

We show that the only requirement needed to guarantee
the existence of a Mean Field Equilibrium in mixed strategies is that the cost is
continuous with respect to the population distribution (convexity is
not needed).  This result nicely mimics the conditions for existence
of a Nash equilibrium in the simpler case of static population games
(see \cite{Sandholm}). The existence of a mean field equilibrium
in mixed strategies has been previously shown by \cite{Lacker2016,carmona2015probabilistic}
in the diffusion case. 
In \cite{Gueant2014} the existence of a Mean
Field Equilibrium is proven under the assumption that the cost of a
player is strictly convex w.r.t. her strategy and in \cite{GMS13} the
authors also consider uniformly convex functions. These conditions are
rather strong because they are not satisfied in the important case of
linear and/or expected costs.
In \cite{Fischer} existence of a Nash equilibrium is also proved under mere continuity assumptions
and with a compact action space (more general than the simplex, used here).
However, the main  difference between the two approaches  is the type of mean field limit that is used. In \cite{Fischer},   the trajectories  of the states of the players are considered while we only consider the state at time $t$. The first approach uses arguments in line with the propagation of chaos while the second one is closer to the work in \cite{Kolokotsov,tembine2009mean}.
    While the convergence of trajectories is a more refined convergence than the point-wise convergence in general, this is useless here. Indeed, for mean field games,  costs are associated to states and actions and not to  trajectories. Therefore, the point-wise mean field approach is sufficient.
Another difference with \cite{Fischer} is that  an additional assumption about the uniqueness of the argmin is needed in some parts of the convergence proof as well as for existence  (in the feedback case). This is not the case here, so both papers do not cover the exact same set of games.

As in most existence proofs, our  proof is based on a version of the fixed point theorem of Kakutani
in infinite dimension (see for example  \cite{carmona2016finite}
where such an extended version of the fixed point theorem is used in a mean field game model with minor and major players). Here however, we do not  consider the
best response operator but the evolution of the population
distribution instead, as in \cite{Fischer}.  Out of the four cases (asynchronous/synchronous, finite/infinite horizons),
we mainly detail the asynchronous player case
for which we prove this existence of a mean field equilibrium in an
infinite horizon with discounted costs. We also show, more briefly, how these results
can be extended to a finite horizon or to a finite or infinite time
horizon in the synchronous-player case.

Our second contribution concerns convergence of finite games to mean
field limits. 
  Different authors have studied the convergence of
$N$-player games equilibria to mean field equilibria, \emph{e.g.}
\cite{huang2012mean,adlakha2013equilibria,tembine2011mean,tembine2009mean}.
The type of strategies considered in these paper is different from
ours: they consider that the strategy of a player only depends on her
internal state (these are called {\it stationary policies} in
\cite{tembine2009mean}), whereas here we allow time dependence in these
policies. The model in \cite{tembine2009mean} does include state
dynamics that depend on the population distribution but only considers
stationary strategies that do not depend on time, hence cannot depend
on the population dynamics.

In all four combinations (finite / infinite horizon, synchronous /
asynchronous), a mean field equilibrium is always an
$\epsilon$-approximation of an equilibrium of a corresponding game
with a finite number $N$ of players, where $\epsilon$ goes to 0 when
$N$ goes to infinity. This is the discrete pending result to similar results in continuous games \cite{Delarue}.
However, we show also that not all equilibria for
the finite version converge to a Nash equilibrium of the mean field
limit of the game.  We provide several counter-examples to illustrate
this fact.  They are all based on the following idea: The ``tit for
tat'' principle allows one to define many equilibria in repeated games
with $N$ players. However, when the number of players is infinite, the
deviation of a single player is not visible by the population that
cannot punish him in retaliation for her deviation. This implies that
while the games with $N$ players may have many equilibria, as stated
by the folk theorem, this may not be the case for the limit game.
This fact is well-known for large repeated games (see examples of  {\it Anti-folk Theorems} in \cite{Sabourian,Nabil}).
However, up to our knowledge, these results  have not yet been investigated in the mean field game framework.
\footnote{An extended
  abstract discussing our  counterexample  in  the continuous time model
  with infinite horizon was presented in \cite{DGG16}.}

Finally, our four models of dynamic games do not face the issue of the order of play, nor partial information.
Thus, we avoid two
difficulties of dynamic games: the information structure of each
player and the existence of a value \cite{Dasgupta}.  In our case, all
players are similar, so the order of play is irrelevant, and we only
consider the full information case: players know the strategy of the
other players and the current global state (more details on this are given in Section~\ref{ssec:info}). 

The rest of the article is organized as follows.  We introduce mean
field games with explicit interactions in continuous time in
Section~\ref{sec:continuous-time} where we mainly focus on the
infinite horizon with discounted costs.  We describe the evolution of
the state of the players, the cost function as well as the
best response operator. In both cases (finite and infinite horizon),
we prove the existence of an equilibrium.  We show in Section
\ref{sec:conv} that this equilibrium is an approximation of an
equilibrium for the game with a finite number of players.  Finally, we
study an example of an $N$-player game inspired from the prisoner's
dilemma whose equilibria are not always equilibria for the limit mean
field game.  We focus on the synchronous case in
Section~\ref{sec:discrete-time} (where players all play at the same
time).  In this case, $N$-player games can be seen as classical
stochastic games in discrete time.  We derive the mean field limit
dynamics and the existence of an equilibrium.  Here counter-examples
of equilibria for finite games that do not go to the limit are easier
to find. Indeed, the folk theorem applies and all equilibria based on
retaliation cannot be equilibria at the limit.


%
%

\section{Discrete Mean Field Games in Continuous Time}
\label{sec:continuous-time}

\subsection{Notations and Definitions}
\label{sec:model:sub:notations-defs}
\label{sec:def}

A discrete mean field game ${\cal G}$ is a  tuple
${\cal G} = (\calE,\calA,\{Q_a\},\bbm_0,\{c_a\},\beta)$, where $\calE$ is
the state space, $\calA$ the action set, $\{Q_a\}$ the transition rate
matrices, $\bbm_0$ the initial state, $\{c_a\}$ the cost functions and
$\beta \in \R$ a discount factor.

The game is described as follows.

\paragraph{\bf State and action sets}
We consider a {\it population} made of an infinite number of  homogeneous {\it players} that evolve in continuous time.  Each player has a finite state space denoted by
$\calE=\{1,\dots,E\}$ and a finite action set
$\mathcal{A}=\{1,\dots,A\}$. 

We denote by $\calP(\calA)$ (resp.  $\calP(\calE)$) the set of probability measures over $\calA$ (resp.  $\calE$). Since $\calA$ is finite,
$\calP(\calA)$ is the simplex of dimension $A$.

\paragraph{\bf Set of strategies}
A {\it mixed strategy} (or strategy for short) is a measurable
function $\pi:\calE\times\R^+\to\calP(\calA)$, that associates to each
state $i\in\calE$ and each time $t\ge0$ a probability measure
$\pi_i(t) \in \calP(\calA)$ on the set of possible actions.  We also
denote by $\pi_{i,a}(t)$ the probability that, at time $t$, a player
in state $i$ takes the action $a$, under strategy $\pi$. For all
$t\ge0$ and all $i\in\calE$, we have $\sum_{a\in\calA}\pi_{i,a}(t)=1$.
The set of all possible strategies is denoted by $\strat$.

We say that a strategy is
{\it pure}  if, for all state $i$ and all $t\in\mathbb{R}$, there
exists an action $a\in\calA$ such that $\pi_{i,a}(t)=1$ and
$\pi_{i,a^\prime}(t)=0$ for all $a^\prime\neq a$.

The set $\strat$ is a bounded subset of the Hilbert space of the
functions $\calE\times\R^+\to\R^A$ equipped with the inner product the
exponentially weighted inner product :
$\cro{f,g}=\int_0^\infty f(g)g(t)e^{-\beta t}dt$. This shows that
$\strat$ is weakly compact, where the weak topology is defined as
follows: a sequence of policy $\pi^n$ converges to a policy
$\pi$ if for any bounded function $g$:
\begin{align*}
  \lim_{n\to\infty}\int_{0}^\infty \pi^n(t)g(t)e^{-\beta t}dt = \int_0^\infty
  \pi(t)g(t)e^{-\beta t}dt. 
\end{align*}

\paragraph{\bf Rate matrices}

We denote by $\bbm^\pi(t)\in\calP(\calE)$ the {\it population
distribution} at time $t$.  As the state space is finite,
$\bbm^{\pi}(t)$ is a vector whose $i$-th component,
$m^{\pi}_i(t)$, is the proportion of players in state $i$ at time $t$.
The evolution over time of the population distribution is driven by rate matrices:
 $\{ Q_a(\bbm^{\pi}(t))\}_{a\in \calA}$. 
By definition, $Q_{ija}(\bbm^{\pi}(t))$ is the rate at which a player in state $i$ moves to state $j$ when choosing action $a$, when the population distribution is $\bbm(t)$. Note that by definition, $\sum_{j\in\calE}Q_{ija}(\bbm^{\pi}(t))=0$ for all $i$ and $a$
and $Q_{ija}(\bbm^{\pi}(t))$ is non-negative for all $j\neq i$ and all $a$.

In the following, we assume that for all $i,j,a$,  $Q_{ija}(\bbm)$ is Lipschitz-continuous in $\bbm$ with constant $L$.

The initial condition is $\bbm^{\pi}(0)=\bbm_0$.   For $t\ge0$, the population distribution $\bbm^\pi(t)$ is
the solution of the following differential equation, that depends on
the strategy $\pi$: for $j\in\calE$
\begin{equation}
  \dot{m}^{\pi}_j(t)= \sum_{i\in\calE}\sum_{a\in\calA}{m^{\pi}_i(t) Q_{ija}(\mathbf{m}^{\pi}(t)) \pi_{i,a}(t)} .
  \label{eq:emp-measure}
\end{equation}
The rationale behind this differential equation is that all players in state $i$ use  the action
$a\in\mathcal{A}$ and  move to state $j$
with rate $Q_{ija}(\bbm^{\pi}(t))$. 

If the strategy $\pi_i(t)$ is not continuous in time, the differential
equation \eqref{eq:emp-measure} may not be well-posed at time-points
where $\pi_i$ is not continuous.  The existence of a continuous solution
for \eqref{eq:emp-measure} is guaranteed by the Carathéodory's Existence
Theorem.  The Lipschitz condition on $Q$ further implies that this solution is
essentially unique because any solution of \eqref{eq:emp-measure} must be a fixed point of 
\begin{equation}
  {m}^{\pi}_j(t)= m_{j,0} + \int_0^t \left( \sum_{i\in\calE}\sum_{a\in\calA}{m^{\pi}_i(u) Q_{ija}(\mathbf{m}^{\pi}(u)) \pi_{i,a}(u)} \right)du .
  \label{eq:emp-measure2}
\end{equation}

In anticipation, the same properties (existence and uniqueness of the
solution of the ODE) hold for the differential equation
\eqref{eq:proba-state}.

\begin{remark}[Explicit interactions]
In this model, the rate matrix  $Q_{ija}(\bbm^{\pi}(t))$ depends
explicitly on the population distribution: the rate to go from state
$i$ to state $j$ under action $a$ depends on how the whole population is
distributed among the states of the system.
Other mean field models, such as \cite{Gomes2010308}, only consider the
special case
where  $Q_{ija}(\bbm^{\pi}(t))$ is constant:   $Q_{ija}(\bbm^{\pi}(t))
= Q_{ija}$. This  restricts the population dynamics given in
\eqref{eq:emp-measure} to linear dynamics.
\end{remark}

\paragraph{\bf Cost function}
We now concentrate on a particular player, that we call Player~0. 
Player~0 chooses her own strategy
$\pi^0:\R^+\times\calE\to\calP(\calA)$.
We denote by $\mathbf{x}^{\pi^0}(t)\in\calP(\calE)$ the
probability distribution of Player~0 when Player~0 uses strategy
$\pi^0$ against a population who has distribution $\bbm$. 
For a given state $i\in\calE$,
$x^{\pi^0,\bbm}_i(t)$ denotes the probability for Player~0 to be in
state $i$ at time $t$.  The distribution $\mathbf{x}^{\pi^0,\bbm}$
evolves over time according to the following differential equation:
for $j\in\calE$
\begin{equation}
  \dot{x}^{\pi^0,\bbm}_j(t)=\sum_{i\in\calE}\sum_{a\in\calA}{x_i^{\pi^0}(t) Q_{ija}(\bbm(t)) \pi^0_{i,a}(t)}.
\label{eq:proba-state}
\end{equation}
If Player~0 is in state $i$ and takes an action $a$, it suffers from
an {\it instantaneous cost} $c_{i,a}(\bbm(t))$, that depends on
the population distribution at time $t$. 
We assume that the cost is always continuous in $\bbm$. Given a
population distribution $\bbm$ and the strategy of Player~0 $\pi^0$,
we define the discounted 
cost of Player~0 as
\begin{equation}
  \label{eq:V(pi0,pi)}
  W(\pi^0,\bbm)=\int_0^\infty \left(\sum_{i\in\calE}\sum_{a\in\calA}{x_i^{\pi^0,\bbm}(t) c_{i,a}(\bbm(t)) \pi^0_{i,a}(t) e^{-\beta t}}\right)\ dt,
\end{equation}
where $\beta>0$ is the discount factor.

We also introduce the notation $V(\pi^0,\pi)$ that represents the
discounted cost of Player~0 when the population plays a strategy
$\pi$:
\begin{align*}
  V(\pi^0,\pi) = W(\pi^0,\bbm^{\pi}). 
\end{align*}

\paragraph{\bf Best response}
The {\it best response } to $\pi$ of Player~0 is to choose a strategy $\pi^0 \in \strat$ that minimizes
her discounted cost~\eqref{eq:V(pi0,pi)} when the rest of the
population plays strategy $\pi$.  For a given population strategy
$\pi$, we denote the set of best responses of Player~0 to $\pi$ by
$BR(\pi)$.  This set is the set of strategies that minimizes her
discounted cost:
\begin{equation}
  BR(\pi):=\argmin_{\pi^0\in\strat}V(\pi^0,\pi).
\label{eq:BR}
\end{equation}

Note that the best response function is well defined or, in other
words, that the ``argmin'' is reached for some strategy in Equation
\eqref{eq:BR}. To prove that, we will later prove in
Section~\ref{sec:proofs} that the function $V$ is continuous for the
weak topology. As $\strat$ is weakly compact, this shows that the
minimum in $\pi^0$ is attained.
\begin{proposition}
  \label{prop:V}
  The function $V$, defined in Equation~\eqref{eq:V(pi0,pi)} is
  continuous in $\pi^0$ and $\pi$ (for the weak-topology on
  $\strat$). 
\end{proposition}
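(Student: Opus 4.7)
The plan is to reduce continuity of $V(\pi^0, \pi) = W(\pi^0, \bbm^\pi)$ to two intermediate continuity statements about the ODEs governing the dynamics: that the map $\pi \mapsto \bbm^\pi$ is continuous from the weak topology on $\strat$ to uniform convergence on every compact interval $[0, T]$, and that the map $(\pi^0, \bbm) \mapsto \mathbf{x}^{\pi^0, \bbm}$ is continuous in the same sense. Once both are established, joint continuity of $V$ follows by splitting $\int_0^\infty = \int_0^T + \int_T^\infty$ in the definition of $W$: the tail is bounded uniformly in $n$ by $\|c\|_\infty e^{-\beta T}/\beta$ (using boundedness of the cost on the compact simplex $\calP(\calE)$), while on $[0, T]$ the integrand converges as explained below.

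The main technical step is continuity of $\pi \mapsto \bbm^\pi$. Given $\pi^n \to \pi$ weakly, the trajectories $\bbm^{\pi^n}$ are equibounded (they live in $\calP(\calE)$) and equicontinuous (their derivatives are bounded by $\max |Q_{ija}|$, which is finite because $Q$ is continuous on the compact $\calP(\calE)$), so by Arzelà--Ascoli every subsequence has a further subsequence converging uniformly on $[0, T]$ to some $\bbm^*$. To identify $\bbm^*$, I pass to the limit in the integral equation \eqref{eq:emp-measure2} by decomposing the integrand as
\begin{align*}
  m_i^{\pi^n}(u) Q_{ija}(\bbm^{\pi^n}(u)) \pi^n_{i,a}(u) &= \bigl[m_i^{\pi^n}(u) Q_{ija}(\bbm^{\pi^n}(u)) - m_i^*(u) Q_{ija}(\bbm^*(u))\bigr] \pi^n_{i,a}(u) \\
  &\quad + m_i^*(u) Q_{ija}(\bbm^*(u)) \pi^n_{i,a}(u).
\end{align*}
The first piece integrates to $0$ in the limit because the bracket tends to $0$ uniformly on $[0, T]$ (uniform convergence of $\bbm^{\pi^n}$ plus Lipschitz continuity of $Q$) while $\pi^n_{i,a}$ is bounded by $1$. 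The second piece converges to $\int_0^t m_i^*(u) Q_{ija}(\bbm^*(u)) \pi_{i,a}(u) du$ by the definition of weak convergence applied to the bounded test function $u \mapsto m_i^*(u) Q_{ija}(\bbm^*(u)) e^{\beta u} \id_{[0, t]}(u)$. Hence $\bbm^*$ satisfies \eqref{eq:emp-measure2} and, by the uniqueness noted after that equation, equals $\bbm^\pi$; since every subsequential limit is $\bbm^\pi$, the full sequence converges uniformly on $[0, T]$.

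The same bootstrap, applied to \eqref{eq:proba-state} with $(\pi^0_n, \bbm^{\pi_n})$ playing the role of $(\pi^n, \bbm^{\pi^n})$, gives the analogous statement for $\mathbf{x}^{\pi^0, \bbm}$. Combining the two, if $(\pi^0_n, \pi_n) \to (\pi^0, \pi)$ weakly then the product $x_i^{\pi^0_n, \bbm^{\pi_n}}(t) c_{i, a}(\bbm^{\pi_n}(t))$ converges uniformly on $[0, T]$ to $x_i^{\pi^0, \bbm^\pi}(t) c_{i, a}(\bbm^\pi(t))$ (using continuity of $c$), and a third instance of the uniform-plus-weak decomposition yields convergence of the integral over $[0, T]$, and hence of $V$. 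The main obstacle throughout is the nonlinear coupling: a weak limit of $\pi^n$ does not immediately commute with the nonlinear expression $Q(\bbm^{\pi^n}) \pi^n$, so one first needs Arzelà--Ascoli to upgrade trajectory convergence to uniform convergence, after which the remaining $\pi^n$ factor is handled by the weak-topology definition.
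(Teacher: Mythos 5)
Your proof is correct and follows essentially the same route as the paper: Arzel\`a--Ascoli compactness of the trajectories, extraction of a uniformly convergent subsequence, passage to the limit in the integral equation \eqref{eq:emp-measure2} by combining uniform convergence of $\bbm^{\pi^n}$ with weak convergence of $\pi^n$, and identification of the limit via uniqueness. Your explicit decomposition of the integrand and the tail estimate merely spell out details the paper leaves implicit in the sentence ``the convergence holds because $\pi_n$ converges weakly to $\pi$ and $\bbm^{\pi_n}$ converges uniformly on all compact to $\bbm$.''
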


\paragraph{\bf  Mean field equilibrium}
We then define a mean field equilibrium as a strategy $\pi^{MFE}$ such
that when the population strategy is $\pi^{MFE}$, a selfish Player~0
would also choose the same strategy $\pi^{MFE}$ as her best response.

\begin{definition}[Mean Field Equilibrium]
  A strategy $\pi$ is called a mean field equilibrium if it is a fixed
  point for the best response function, i.e.,
  \begin{equation}
    \pi^{MFE}\in BR(\pi^{MFE}).\label{eq:mfe}
  \end{equation}
  A mean field equilibrium is  pure if it is a pure
  strategy.
\end{definition}

The rationale behind this definition is when one considers that the
population is formed by players that each take selfish
decisions. As the population is homogeneous, each player best response
is the same as Player~0. In other words, for a given
population strategy $\pi$, all the rational players of the populations
(or players) choose the strategy $BR(\pi)$.  As in classical games, a mean field equilibrium
is a situation where no player has incentive to deviate unilaterally
from the common strategy.

%
%

\subsection{Existence of Mean Field Equilibrium}
\label{sec:existence-equil}

We now show that, under very general assumptions,  all discrete mean field games admit  a
mean field equilibrium.  As for classical games, these equilibria are
not necessarily \emph{pure}.  As most proof on existence of equilibria, our proof relies on a generalization of Kakutani fixed
point theorem to infinite dimensional spaces. However, the classical approach consisting of showing that  the best response function BR($\pi$) is a
Kakutani map  does not work here when  the cost function is not 
strictly convex.
  Therefore, in our approach we focus on the  state of the game
  instead of the best response function.

As mentioned before, the differential
equations~\eqref{eq:emp-measure}, \eqref{eq:proba-state} and the cost
equation \eqref{eq:V(pi0,pi)} are all well defined under our running 
Assumption~(A1): 
\begin{itemize}
\item[(A1)] The rate function $\bbm\mapsto Q_{ija}(\bbm)$ is
  Lipschitz-continuous in $\bbm$. The cost function
  $\bbm\mapsto c_{i,a}(\bbm)$ is continuous in
  $\bbm$.
\end{itemize}
In particular, this assumption implies that 
the costs and the rates are all bounded by a finite value.

\begin{theorem}
  Any  discrete  mean field game ${\cal G}$ whose rate and cost 
  satisfy Assumption~(A1) admits a mean field equilibrium.
  \label{thm:mfe-existence}
\end{theorem}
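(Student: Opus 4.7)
\textbf{The plan} is to obtain a mean field equilibrium via an infinite-dimensional version of Kakutani's fixed-point theorem (Kakutani--Glicksberg) applied to the space of population \emph{trajectories} rather than to the space of strategies. As the authors hint, a direct application to $BR(\cdot)$ fails: without strict convexity of $W(\pi^0,\bbm)$ in $\pi^0$, the best-response set $BR(\pi)$ is not in general convex-valued. Working at the level of trajectories circumvents this by allowing a linear reformulation via state-action occupation flows.

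\textbf{Step 1 (ambient space).} I would let $\calM$ be the set of functions $\bbm : \R^+ \to \calP(\calE)$ with $\bbm(0)=\bbm_0$ that are Lipschitz with constant $L_Q := \sup_{i,j,a,\bbm}\abs{Q_{ija}(\bbm)}$, which is finite by (A1). Under the topology of uniform convergence on compact sets, $\calM$ is a compact convex subset of the Fr\'echet space $C(\R^+,\R^E)$ (Arzel\`a--Ascoli combined with compactness of $\calP(\calE)$).

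\textbf{Step 2 (the map).} For $\bbm \in \calM$, set $BR(\bbm) := \argmin_{\pi^0\in\strat} W(\pi^0, \bbm)$; this set is nonempty by Proposition~\ref{prop:V} combined with weak compactness of $\strat$. I then define
\begin{equation*}
  \Psi(\bbm) \;:=\; \bigl\{\, \bbx^{\pi^0,\bbm}\; :\; \pi^0 \in BR(\bbm)\,\bigr\} \;\subset\; \calM,
\end{equation*}
where $\bbx^{\pi^0,\bbm}$ is the solution of \eqref{eq:proba-state} starting from $\bbm_0$. A fixed point $\bbm^\star \in \Psi(\bbm^\star)$ yields some $\pi^\star \in BR(\bbm^\star)$ with $\bbx^{\pi^\star,\bbm^\star} = \bbm^\star$; since \eqref{eq:proba-state} reduces to \eqref{eq:emp-measure} when the played strategy equals the population strategy, this gives $\bbm^\star = \bbm^{\pi^\star}$ and therefore $\pi^\star \in BR(\bbm^{\pi^\star})$, i.e.\ a mean field equilibrium.

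\textbf{Step 3 (Kakutani hypotheses and the hard part).} The critical verification is convexity of $\Psi(\bbm)$. I would establish it by recasting the best-response problem for Player~0 in terms of the occupation flow $z_{i,a}(t) := x_i^{\pi^0}(t)\,\pi^0_{i,a}(t)$: with $\bbm$ held fixed, both the dynamics \eqref{eq:proba-state} and the cost $W(\pi^0,\bbm)$ become \emph{linear} in $z$, so the set of optimal $z$'s is convex, and their induced trajectories $\bbx$ form a convex subset of $\calM$; a corresponding strategy is recovered on the support of $x_i^{\pi^0}$ via $\pi^0_{i,a}(t) = z_{i,a}(t)/x_i^{\pi^0}(t)$. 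Upper hemicontinuity of $\Psi$ follows from joint continuity of $(\pi^0,\bbm)\mapsto \bbx^{\pi^0,\bbm}$ (Gr\"onwall, using Lipschitz continuity of $Q$) combined with Berge's maximum theorem applied to the continuous objective $W$ (Proposition~\ref{prop:V}); this also yields closedness and hence compactness of $\Psi(\bbm)$. The Kakutani--Glicksberg fixed-point theorem then produces $\bbm^\star \in \Psi(\bbm^\star)$, proving Theorem~\ref{thm:mfe-existence}.
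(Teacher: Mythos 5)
Your proposal is correct and follows essentially the same route as the paper: the same compact convex trajectory space $\calM$, the same set-valued map (your $\Psi$ is the paper's $\Phi$), convexity via the identical linear reformulation in the occupation flow $z_{i,a}(t)=x_i(t)\pi^0_{i,a}(t)$, upper hemicontinuity via a closed-graph argument, and an infinite-dimensional Kakutani-type fixed-point theorem. The only cosmetic differences are your use of uniform convergence on compacts in place of the paper's exponentially weighted sup-norm, and your invocation of Berge's maximum theorem where the paper verifies the closed graph directly.
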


Note that in general, the best response function $\pi\mapsto BR(\pi)$
is neither continuous nor hemi-continuous in general under (A1). In
particular, the best response set $BR(\pi)$ may not be a convex set.
This makes difficult the application of the classical fixed point
theorems on the best response function.  As a result, our proof will
formulate the fixed point problem in an alternative manner by
considering a fixed point in $\bbm$.

\subsection{Proofs}
\label{sec:proofs}

\subsubsection{Proof of Proposition~\ref{prop:V}}

For a strategy $\pi$, the function $\bbm^{\pi}$ satisfies the
differential equation~\eqref{eq:emp-measure}. As $\bbm^\pi(t)$ lives
in a compact and the functions $Q$ are continuous, the right-hand side
of this differential equation is bounded. This shows that there exists
a constant $L'$ such that for any strategy $\pi$, the function
$\bbm^{\pi}$ is Lipschitz-continuous with constant $L'$. Similarly the
function $\bbx^{\pi^0}$ is also Lipschitz-continuous with constant
$L'$.

Let $\calM$ be the set of functions from $\R^+$ to $\calP(\calE)$ that
are Lipschitz-continuous with constant $L'$. We equip this set with
the exponentially weighted $L_\infty$-norm :
\begin{align*}
  \norm{\bbm-\bbm'}=\sup_{i\in\calE,t\ge0}\abs{m_i(t)-m'_i(t)}e^{-\beta
  t}. 
\end{align*}
By the Arzela-Ascoli theorem, $\calM$ is a compact space. 
 
To prove that $V$ is continuous in $\pi$ and $\pi^0$, it suffices to
show that the mapping $\pi\mapsto\bbm^{\pi}$ is continuous (for the
weak topology) and that the mapping
$(\pi^0,\bbm)\mapsto x^{\pi^0,\bbm}$ is continuous.  To prove the
continuity of $\bbm^{\pi}$, let $\pi_n$ be a sequence of strategy that
converges to a strategy $\pi$. As $\calM$ is compact, there exists a
function $\bbm$ and a subsequence of $m^{\pi_n}$ that converges to
$\bbm$. Moreover, we have :
\begin{align}
  {m}_j(t) &= m_{j,0} + \lim_{n\to\infty}\int_0^t \left(
             \sum_{i\in\calE}\sum_{a\in\calA}{m^{\pi_n}_i(u)
             Q_{ija}(\mathbf{m}^{\pi^{n}}(u)) (\pi_{n})_{i,a}(u)}
             \right)du\nonumber\\
           &= m_{j,0} + \int_0^t \left(
             \sum_{i\in\calE}\sum_{a\in\calA}{m_i(u)
             Q_{ija}(\mathbf{m}(u)) \pi_{i,a}(u)}
             \right)du,\label{eq:V_proof} 
\end{align}
where the convergence holds because $\pi_n$ converges weakly to $\pi$
and $\bbm^{\pi_n}$ converges uniformly on all compact to $\bbm$. 

Equation~\eqref{eq:V_proof} shows that the function $\bbm$ is equal to
the function $\bbm^{\pi}$. This shows that $\pi\to\bbm^{\pi}$ is
continuous in $\pi$ which implies that $V$ is continuous in $\pi$.

The proof that $(\pi^0,\bbm)\mapsto x^{\pi^0,\bbm}$ is continuous is
very similar to the above proof and we therefore omit it.

\subsubsection{Proof of Theorem~\ref{thm:mfe-existence}}

Recall that for a given population distribution $\bbm\in\calM$, the
cost of a strategy $\pi^0$ is defined as
\begin{align}
  \label{eq:W1}
  W(\pi^0,\bbm) &= \int_0^{\infty}{\left(\sum_{i,a}x_i(t) \pi^0_{i,a}(t)
                  c_{i,a}(\bbm(t)) e^{-\beta t}\right)dt},\\
                &\text{ where $\bbx$ satisfies (for all $j\in\calE$): }\dot{x}_j(t)=\sum_{i,a}{x_i(t)
                  Q_{ija}(\bbm(t))\pi^0_{i,a}(t)}.
    \label{eq:W2}
\end{align}

We now define the function $\Phi:\calM\to 2^\calM$ as the best
response to a population distribution $\bbm$. It is a mapping that
associates to a population distribution $\bbm\in\calM$, the set of all
state distributions that can be induced by an optimal policy:
\begin{align}
  \label{def:Phi}
  \Phi(\bbm)=&\left\{\bbx^{\pi^0} \text{ such that }
               \pi^0\in\argmin_{\pi\in\strat}{W(\pi,\bbm)}\right\}.
\end{align}

In the remainder of the proof, for all $\bbm\in\calM$, $\Phi(\bbm)$ is
well defined and non empty (\emph{i.e.}, the minimum is attained), is
convex and compact.  Moreover, we will also show that the function
$\Phi(\cdot)$ is upper-semicontinuous. As $\calM$ is compact
\cite[Prop. 11.11]{border1989fixed}, this shows that $\Phi(\cdot)$
satisfies the conditions of the fixed point theorem given in
\cite[Theorem 8.6]{granas2013fixed} and therefore has a fixed point
$\bbm^*$. By the definition of $\Phi$, this implies that there exists
a strategy $\pi^0$ that is a best-response to $m^{\pi^0}$, which
implies that $\pi^0$ is a mean field equilibrium.

\textbf{Definition of $\Phi(\bbm)$} -- It can be shown that $W$ is
continuous (by using a reasoning similar to the one for $V$
(Proposition~\ref{prop:V})).  This shows that there exists  $\pi^0$
that attains the minimum on the right hand side of
Equation~\eqref{def:Phi}, which shows that $\Phi(\bbm)$ is well
defined and non-empty.

\textbf{Compactness of $\Phi(\bbm)$} -- Let us consider the following
optimization problem:
\begin{align}
  \label{eq:Z1}
  &\min_{\bbx,\bbz}
    \int_0^{\infty}{\left(\sum_{i,a}z_{i,a}(t) c_{i,a}(\bbm(t)) e^{-\beta t}\right)dt}\\
  & \text{such that $\bbz$ satisfies }
    \left\{
    \begin{array}{ll}
      \sum_{a}z_{j,a}(t)=x_j(t) &\forall j\in\calE,\\
      z_{j,a}(t)\geq 0, &\forall j\in\calE, \forall a\in\mathcal{A},\\
      \dot{x}_j(t)=\sum_{i,a}{z_{i,a}(t)Q_{ija}(\bbm(t))} &\forall j\in\calE.
    \end{array}
                                                            \right.
                                                            \label{eq:Z2}
\end{align}
The above problem is a linear problem, which implies that the set of
optimal solutions is convex and compact. Let us show that the set of
optimal solution of the optimization problem~\eqref{eq:Z1} is
$\Phi(\bbm)$. To show this, let us remark that  the constraints \eqref{eq:W2} are
equivalent to the constraints \eqref{eq:Z2} by replacing the variables
$x_i(t) \pi^0_{i,a}(t)$ by $z_{i,a}(t)$. Then, the constraint
$\pi\in\strat$ of \eqref{eq:W2}, that corresponds to
$\pi^0(t)\in\calP(\calA)$, is replaced with $z_{i,a}(t)\ge0$ and
$\sum_a z_{i,a}(t)=x_i(t)$.

\textbf{Upper-semi continuity of $\Phi$}. To prove that $\Phi$ is
upper-semi continuous, let us show that the graph of
$\bbm\mapsto\Phi(\bbm)$ is closed. Let $\bbm_n\in\calM$ and
$\bbx_n\in \Phi(\bbm_n)$ be two sequences such that
$\lim_{n\to\infty} \bbm_n= \bbm_\infty$ and
$\lim_{n\to\infty} \bbx_n= \bbx_\infty$. We want to show that
$\bbx_\infty\in \Phi(\bbm_\infty)$.

As $W$ is continuous, for all $\bbx_n\in\Phi(\bbm_n)$, there exists a
strategy $\pi_n$ that minimizes $W(\pi,\bbm_n)$ and such that
$\bbx_n=\bbx^{\pi_n,\bbm_n}$. As the set $\strat$ is weakly compact,
this sequence of strategies has a subsequence that converges weakly to
a strategy $\pi_*$. Moreover, we have:
\begin{itemize}
\item As $W$ is continuous, $\pi_*$ minimizes
  $W(\pi,\bbm_\infty)$. This shows that
  $\bbx^{\pi_*}\in\Phi(\bbm_\infty)$.
\item The solution of \eqref{eq:W2} is continuous in $\pi$ and $\bbm$,
  which shows that $\bbx_\infty=\bbx^{\pi_*,\bbm_\infty}$.
\end{itemize}
Combining these two facts shows that $\bbx_\infty\in\Phi(\bbm_\infty)$
which implies that the graph of $\Phi$ is closed. 
\qed

\begin{remark}
  The continuity assumption (A1) is tight in the following sense: 

1- If the rate $Q$ is not Lipschitz-continuous in $\bbm$, then the evolution of the population is not well defined, in the sense that the evolution equation \eqref{eq:emp-measure} may have several solutions or no solution at all.

2- There exist games with non-continuous cost functions that do not
admit any mean field equilibrium. For example, consider the following
mean field game:
\begin{align*}
  {\cal G} = \bigg(\calE = \{1,2\}, \calA = \{a,b\}, Q_a = 0, Q_b = \left[
  \begin{array}{cc}
    -1&1\\
    0&0
  \end{array}
       \right],m(0)=(1,0)
  \\
  c_a(m_1,m_2) = 0, c_b(m_1,m_2) = \left\{
  \begin{array}{cc}
    -1&\text{if $m_2\le1/2$}\\
    1&\text{otherwise}
    \end{array}
        \right.,
        \beta\bigg). 
\end{align*}
Assume that this game has a mean field equilibrium and let denote by
$m(t)$ the state at equilibrium. By definition of $Q_a$ and $Q_b$,
$m_2(t)$ is a non-decreasing function. Hence, let
$\tau=\sup\{t: m_2(t)\le1/2\}$ (note that
$\tau\in[\ln2;+\infty)\cup\{+\infty\}$). It should be clear that the
best response of Player~0 to any state function $m$ is the policy
$\pi^{(\tau)}$ that consists in playing ``$b$'' until $\tau$ and
``$a$'' after $\tau$. However, such a policy is never a mean field
equilibrium: under the policy $\pi^{(\tau)}$,
$m_2(t)=1-e^{-\min(t,\tau)}$, which means that
$\sup\{t: m_2(t)\le1/2\}\in\{\ln2,+\infty\}$. None of the policies
$\pi^{(\ln2)}$ or $\pi^{(\infty)}$ is an equilibrium: the policy
$\pi^{(\ln2)}$ is the best response to $\pi^{(\infty)}$ and
vice-versa.

\end{remark}


\section{Convergence of Finite Games to Mean Field Games}
\label{sec:conv}

Mean field games are often presented as a limit of a sequence
of finite games as the number $N$ of players goes to infinity. In this
section, we investigate positive and negative results that link finite
games and mean field games. 

\subsection{Markov Game with $N$ Exchangeable Players}
\label{sec:continuous_N}

To any discrete mean field game
${\cal G}= (\calE,\calA,\{Q_a\},\bbm_0,\{c_a\},\beta)$, one can
associate a stochastic $N$-player game
${\cal G}^N= (N,\calE,\calA,\{Q_a\},\bbm_0,\{c_a\},\beta)$ as
follows. The finite stochastic game ${\cal G}^N$ has the same state
and action spaces $\calE,\calA$, the same rate matrices $Q_a$, the
same cost functions $c_a$, the same discount factor $\beta$, and the
same initial state as ${\cal G}$.  The time evolution of the finite
game is as follows.  At any time $t$, each player (say Player~$n$)
chooses a (randomized) action $A_n(t) \in \calP(\calA)$.

We consider a mean field interaction model between the players, which
means that the behavior of one object only depends on the states of
the other objects through the proportion of objects that are in a
given state. To be more precise, we denote by
${\bf M}(t)\in\calP(\calE)$ the population distribution of the system
at time $t$. As the set $\calE$ is finite, ${\bf M}(t)$ is a vector
with $|\calE|$ components and for all $i\in\calE$, $M_i(t)$ is the
fraction of players that have state $i$ at time $t$:
\begin{equation*}
  M_i(t) = \frac{1}{N}\sum_{n=1}^N \mathbf{1}_{\{X_n(t)=i\}}. 
\end{equation*}

The state  of one player (say Player~n) follows a continuous time Markov chain whose rate varies over time.
The only dependence between players is through the rate that depends on the population distribution.

More precisely, 
the evolution of the state of Player~$n$, under
$\calF_t$, the natural filtration of the process, satisfies for all $k \in \N$ and all states $i\not= j$,
\begin{equation}
  \proba{X_n(t + dt)=j \middle| X_n(t)=i ,  {\bf M}(t) =
    \bbm , A_n(t)=a , \calF_t} =
   Q_{ija}(\bbm) dt + o(dt),
  \label{eq:R_N_c}
\end{equation}
where $A_n(t)$ is the action taken by Player~$n$ at time $t$.

At any time $t$, Player~$n$ suffers an instantaneous cost that is a
function of her state $X_n(t)$, the action that she takes $A_n(t)$ and
the population distribution ${\bf M}(t)$. We write this instantaneous cost
$c_{X_n(t),A_n(t)}({\bf M}(t))$.

The objective of Player~$n$ is to choose a
strategy $\pi^n$ from some set of admissible strategies $\Pi$, in
order to minimize her expected discounted cost, knowing the
strategies of the others. As before, the discount factor is denoted
by $\beta$.
Given a strategy $\pi^n\in\Pi$ used by Player~$n$ and a strategy
$\pi\in\Pi$ used by all the others, we denote by $V^N(\pi^n,\pi)$ the
expected discounted cost of Player~$n$:
\begin{equation*}
  V^N(\pi^n,\pi) = \expect{\int e^{-\beta 
      t}c_{X_n(t),A_n(t)}({\bf M}^{\pi}(t))dt \middle| %
    \begin{array}{l}
      \text{$A_n$ is chosen w.r.t. $\pi^n$} \\
      \text{$A_{n'}$ is chosen w.r.t. $\pi$ ($\forall n'\ne n$)}
    \end{array}
  }.
\end{equation*}

A Nash equilibrium for this game is a strategy $\pi$ such that Player $n$
does not have another  admissible strategy that leads to a lower cost.
This notion depends naturally on the set of admissible strategies.
\begin{definition}[Equilibrium of the $N$ player game]
  For a given set of strategies $\Pi$, a strategy $\pi\in\Pi$ is
  called a symmetric equilibrium in $\Pi$ if for any strategy
  $\pi^n\in\Pi$:
  \begin{equation*}
    V^N(\pi,\pi) \le  V^N(\pi^n,\pi). 
  \end{equation*}
\end{definition}

We will also use the notion of $\varepsilon$-equilibrium:
\begin{definition}[$\varepsilon$-equilibrium of the $N$ player game]
  For a given set of strategies $\Pi$, a strategy $\pi\in\Pi$ is
  called an $\varepsilon$- symmetric equilibrium in $\Pi$ if for any strategy $\pi^n\in\Pi$: 
  \begin{equation*}
    V^N(\pi,\pi) \le  V^N(\pi^n,\pi) + \varepsilon. 
  \end{equation*}
\end{definition}

\subsection{Subsets of Admissible Strategies}
\label{ssec:info}

In a \emph{full information} setting, $A_n(t)$ is a (possibly random)
function of the values $X_{n'}(t')$ up to time $t'\le t$ and all
actions taken in the past $A_{n'}(t')$, for $t'<t$ and for
$n'\in\{1\dots N\}$. Such a strategy is, however, hard to
analyze. Therefore, in the following, we will consider two natural
subclasses for the set of admissible strategies, depending on the information available to the players:
\begin{itemize}
\item (Markov) -- A strategy $\pi$ is called a \emph{Markov strategy} if
  it induces a choice of $A_n(t)$ that is a (possibly random) measurable function
  of only $t$, ${\bf M}(t)$ and ${\bf X}(t)$:
  \begin{equation*}
    \proba{A_n(t)=a\mid\calF_t}=\pi_{a,X_n(t)}(t,{\bf M}(t)). 
  \end{equation*}
  This definition is motivated by the fact that, as indicated by
  Equation~\eqref{eq:R_N_c}, the behavior of one object depends on the
  others only through the value ${\bf M}(t)$. This implies that when all the
  other players use a Markov strategy, the set of Markov strategies is
  dominant among the set of full-information strategies: there exists a
  full-information best response for Player~$n$ that is a Markov
  strategy.
Furthermore, any Markov game admits a Markovian Nash equilibrium (see \cite{fink1964}).
\item (Local) -- A strategy $\pi$ is a  \emph{local strategy} if the
  choice of the action only depends on the player's internal state and
  on the time.
  \begin{equation*}
    \proba{A_n(t)=a\mid\calF_t}=\pi_{a,X_n(t)}(t). 
  \end{equation*}
  If a player uses a local  strategy, its actions may depend on time, hence may track the law  of the population  ${\bf M}(t)$ (but not  ${\bf M}(t)$ itself). 
 Also notice  that a local strategy is not necessarily stationary because of its dependence on time.
\end{itemize}

\subsection{Nash Equilibria Limits}

The next theorem provides a relation between local equilibria of
finite games and mean field equilibria of the limit mean field
game. In particular, it shows that mean field equilibria are a good
approximation of local equilibria. However, as we will show later,
this result does not hold for Markovian equilibria.

\begin{theorem}
\label{th:coonv1}
Consider a finite stochastic  game ${\cal G}^N$, with $N$ players and 
  assume that (A1)  holds for its rate matrices $Q_a$ and its cost functions $c_a$. Then:
  \begin{itemize}
  \item[(i)] Let $\pi$ be a mean field equilibrium of the associated mean field game ${\cal G}$. There exists $N_0$ such
    that for all $N\ge N_0$, $\pi$ is a local
    $\varepsilon$-equilibrium of the $N$ player game.
  \item[(ii)] Let  $(\pi^N)_{N\in \N}$ be  a sequence of local strategies such that
    $\pi^N$ is an $\varepsilon_N$-equilibrium for the $N$ player game, with $\varepsilon_N \to 0$.  Then  any
    sub-sequence of the sequence $(\pi^N)$ has a sub-sequence that
    converges weakly to a mean field equilibrium of ${\cal G}$.
  \end{itemize}
\end{theorem}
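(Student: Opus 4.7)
The plan rests on a \emph{mean field approximation lemma} that should be established first: if every player in $\mathcal{G}^N$ uses a local strategy $\pi^{(N)}$ while a distinguished Player~0 uses a local deviation $\sigma^{(N)}$, and if $\pi^{(N)}\to\pi$ and $\sigma^{(N)}\to\sigma$ weakly in $\strat$, then
\begin{align*}
  \lim_{N\to\infty} V^N(\sigma^{(N)},\pi^{(N)}) \;=\; V(\sigma,\pi) \;=\; W(\sigma,\bbm^{\pi}).
\end{align*}
This is a standard propagation-of-chaos statement. A coupling plus a Gr\"onwall argument using the Lipschitz bound on $Q$ from (A1) gives $\expect{\norm{\mathbf{M}^{\pi^{(N)}}(t)-\bbm^{\pi^{(N)}}(t)}} = O(1/\sqrt{N})$ uniformly on compact time intervals, and the continuity of $\pi\mapsto\bbm^{\pi}$ already shown in the proof of Proposition~\ref{prop:V} then lets us replace $\bbm^{\pi^{(N)}}$ by $\bbm^{\pi}$ in the limit. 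Since Player~0 contributes only $O(1/N)$ to $\mathbf{M}$, her state law converges to the deterministic solution $\bbx^{\sigma,\bbm^{\pi}}$ of \eqref{eq:proba-state}. Continuity of $c_{i,a}$ in $\bbm$ together with dominated convergence on the discounted integral (the factor $e^{-\beta t}$ with bounded integrand providing the dominant) then produce the stated limit.

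For (i), argue by contradiction. If the MFE $\pi$ fails to be a local $\varepsilon$-equilibrium of $\mathcal{G}^N$ along a sequence $N_k\to\infty$, there exist local deviations $\sigma^k\in\Pi$ with $V^{N_k}(\pi,\pi) > V^{N_k}(\sigma^k,\pi) + \varepsilon$. Weak compactness of $\strat$ extracts a subsequence (not relabeled) with $\sigma^k\to\sigma^*$ weakly. Apply the lemma on both sides, with constant population strategy $\pi$ and deviations $\pi$ and $\sigma^k$ respectively; passing to the limit yields $V(\pi,\pi)\ge V(\sigma^*,\pi)+\varepsilon$, contradicting the MFE inequality $V(\pi,\pi)\le V(\sigma^*,\pi)$.

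For (ii), let $\pi^N\in\Pi$ be local $\varepsilon_N$-equilibria with $\varepsilon_N\to 0$. By weak compactness of $\strat$, any subsequence has a further subsequence $\pi^{N_k}$ converging weakly to some $\pi^*\in\strat$. Fix an arbitrary $\pi'\in\strat$ and apply the lemma with population strategy $\pi^{N_k}$ and the two deviations $\pi^{N_k}$ and $\pi'$:
\begin{align*}
  V^{N_k}(\pi^{N_k},\pi^{N_k})\to V(\pi^*,\pi^*),\qquad V^{N_k}(\pi',\pi^{N_k})\to V(\pi',\pi^*).
\end{align*}
Passing the inequality $V^{N_k}(\pi^{N_k},\pi^{N_k}) \le V^{N_k}(\pi',\pi^{N_k}) + \varepsilon_{N_k}$ to the limit yields $V(\pi^*,\pi^*)\le V(\pi',\pi^*)$ for every $\pi'\in\strat$, so $\pi^*\in BR(\pi^*)$, which is the MFE condition.

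The main obstacle is proving the mean-field approximation lemma with enough robustness to handle \emph{weakly} (rather than strongly) convergent strategies $\pi^{(N)},\sigma^{(N)}$, since the $N$-player process is a random jump process whose very rates depend on the random empirical measure. The coupling must simultaneously deliver $L^1$-concentration of $\mathbf{M}^{\pi^{(N)}}$ around the deterministic ODE solution $\bbm^{\pi^{(N)}}$ and exploit the weak continuity of $\pi\mapsto\bbm^{\pi}$ from Proposition~\ref{prop:V}; the discount $e^{-\beta t}$, together with the boundedness of the costs guaranteed by (A1), reduces the infinite horizon to an interval $[0,T]$ up to an arbitrarily small tail, after which everything becomes routine from the compactness and continuity tools already developed in Section~\ref{sec:proofs}.
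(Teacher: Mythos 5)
Your proposal is correct and follows essentially the same route as the paper: both arguments rest on (a) convergence of $V^N(\pi^n,\pi)$ to $V(\pi^n,\pi)$ for local strategies, combined with (b) the weak compactness of $\strat$ and the weak continuity of $V$ from Proposition~\ref{prop:V}, to pass the ($\varepsilon$-)equilibrium inequalities to the limit. The only difference is one of sourcing: where you propose to establish the mean-field approximation lemma directly by a coupling/Gr\"onwall propagation-of-chaos argument (a sound plan, since the $O(1/\sqrt{N})$ concentration constants depend only on the Lipschitz bounds of (A1) and not on the particular local strategy), the paper instead imports the uniform convergence $V^N\to V$ from Theorem~3.3.2 of \cite{tembine2009mean}.
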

\begin{proof}
  First,  $V^N(\pi^n,\pi)$ converges to
  $V(\pi^n,\pi)$ uniformly in $(\pi^n,\pi)$.
Uniform convergence follows from Theorem~3.3.2 in
\cite{tembine2009mean} (The theorem is stated for stationary strategies, but  local strategies as defined here are
equivalent to stationary strategies, as defined in
\cite{tembine2009mean}).

  Thus, for any $\varepsilon$, there exists $N_0$ such that $N\ge N_0$
  implies that\\
  $\abs{V^N(\pi^n,\pi)-V(\pi^n,\pi)}\le\varepsilon/2$. Hence, if $\pi$
  is a mean field equilibrium, this implies that for any local strategy
  $\pi^n$:
  \begin{equation*}
    V^N(\pi,\pi) \le V(\pi,\pi)+\frac{\varepsilon}{2} %
    \le V(\pi^n,\pi)+\frac{\varepsilon}{2} \le V^N(\pi^n,\pi)+\varepsilon. 
  \end{equation*}
  This shows \emph{(i)}. 

  For \emph{(ii)}, if $\pi^N$ is a sequence of local strategies, then
  any sub-sequence has a sub-sequence that converge weakly to some
  local strategy $\pi^\infty$. As $V(\pi^n,\pi)$ is continuous in
  $\pi^n$ and $\pi$ (for the weak topology), this implies that
  $V(\pi^\infty,\pi^\infty)\le V(\pi^n,\pi^\infty)$ for all local strategy
  $\pi^n$.
\end{proof}

\subsection{Markov Equilibria May Not Converge to Mean Field Equilibria}
\label{sec:ce}

We now show that Theorem \ref{th:coonv1}-({\it ii}) does not generalize to Markov strategies.
the following example was first presented in \cite{DGG16}.
The main ingredient used to construct the following counterexample, is the
``tit-for-tat'' principle. This principle  can be used to construct equilibria
for any $N$-player game but  cannot be used in mean field games.
This approach has been used in repeated game papers (see for example the examples in \cite{Sabourian}, further generalized \cite{Nabil}). Up to our knowledege, this type of behavior has not yet been described in the mean field game framework.

Let us consider a mean field version of the classical prisoner's dilemma.
The state space of a player is $\calE=\{C,D\}$ (that stand for
Cooperate and Defect) and the action set is the same $\calA=\calE$.
At each time step, one player is chosen. If she selects an action
$a\in\calA$, her state becomes $a$ at the next time step.

The instantaneous cost of a Player~$n$ depends on her state $i$ and on
the mean field $m$:
\begin{equation*}
  c_{i,a}(m) = \left\{
    \begin{array}{ll}
      m_C + 3m_D &\text{ if $i=C$}\\
      2m_D & \text{ if $i=D$}
    \end{array}
  \right.
\end{equation*}
At each time step, this cost function corresponds to a matching game where a player plays
 against a randomly assigned opponent and suffers a cost
that corresponds to the following matrix:
\begin{center}
  \begin{tabular}{|c|c|c|}
    \hline
    &   C   &  D \\\hline
    C&  1,1 &  3,0\\\hline
    D&  0,3 &  2,2\\\hline
  \end{tabular}
\end{center}

The strategy $D$ dominates the strategy $C$. This implies that playing
$D$ is the unique mean field equilibrium. Indeed, the expected cost (given by
\eqref{eq:V(pi0,pi)}) of a Player~0 that has a state vector $x$
while the mean field is $m(t)$ is
\begin{align*}
  & \int_{0}^\infty[x_C(t)(m_C(t) + 3m_D(t))(\pi^0_{CC} (t)+\pi^0_{CD}(t))  + x_D(t)2m_D(t)(\pi^0_{DC}(t) +\pi^0_{DD}(t))]e^{-\beta t}dt\\
  &=\int_{0}^\infty[x_C(t) + 2m_D(t)]e^{-\beta t}dt,
\end{align*} 
by using the fact that $\pi^0_{CC}(t) +\pi^0_{CD}(t) = \pi^0_{DC}(t) +\pi^0_{DD}(t) = 1$ and 
$x_C(t)+x_D(t)=m_C(t)+m_D(t)=1$.

It should be clear that this cost is minimized when $x_C$ is minimal,
which occurs when the strategy is to choose action $D$ regardless of
the current state. This shows that the only mean field equilibrium is
when all players choose action $D$.

Let us now consider the game with $N$ players and consider the
following Markov strategy:
\begin{equation*}
  \pi^N(m)=\left\{
    \begin{array}{ll}
      C& \text{ if $m_C=1$}\\
      D& \text{ if $m_C<1$}
    \end{array}
  \right.
\end{equation*}
and let us show that for $\beta<1$ and $N$ large, $\pi^N$ is a Markov Nash
equilibrium.

Assume that all players, except Player~$n$, play the strategy $\pi^N$
and let us compute the best response of Player~$n$. It should be clear
that if at time $0$, $m_C<1$, then the best response of Player~$n$ is
to play $D$. On the other hand, if $m_C=1$, then:
\begin{itemize}
\item If Player~$n$ applies $\pi^N$, she will suffer a cost
   $\int \exp(-\beta t)dt=1/\beta$.
\item If Player~$n$ deviates from $\pi^N$  and chooses
  the action $D$, all players will also deviate after that time. This implies that $m_D(t) \approx 1-\exp(-t)$ and that the player $n$
  will suffer a cost approximately  equal to
  $\int_0^\infty (x_C(t)+2-2e^{-t})e^{-\beta t}dt \ge
  2/(\beta(\beta+1))$ when $N$ is large.
\end{itemize}
When $\beta<1$, then  $2/(\beta(\beta+1)) > 1/\beta$, so
that Player~$n$ has no incentive to deviate from the strategy $\pi^N$
and that therefore, $\pi^N$ is a Nash equilibrium.  We also observe
that for this example, the value of the finite game does not converge
to the one of the mean field game.

In conclusion to this section, one can argue that this counter-example
should not be surprising because, in mean field games, punishment is
possible against a fraction on the population that deviates but is not
possible against individual deviation, because it is not seen in the
population distribution.

As a final remark, as in the case of repeated games, the continuity  with respect to $m$ (not true here) is critical for convergence (see \cite{Sabourian}).

%
%

\section{Finite Horizon Case}

Let us now consider mean field games over a finite time horizon $T$.
These games are similar to games with discounted costs, previously
defined, but they only run for a finite duration $T$.  As in the
discounted case, the evolution over time of the population
distribution $\bbm^{\pi}$ is given by \eqref{eq:emp-measure} and the
evolution of Player~0's distribution is given by
\eqref{eq:proba-state}.

Given the population strategy $\pi$ and Player~0 strategy $\pi^0$, the
expected cost of Player~0 for the finite horizon case is defined as
follows:
\begin{equation}
  V(\pi^0,\pi)=\int_0^T \left(\sum_{i\in\calE}\sum_{a\in\calA}{x_i(t) c_{i,a}(\bbm^{\pi}(t)) \pi^0_{i,a}(t)}\right)\ dt.
  \label{eq:finite_T}
\end{equation}
In the literature, similar models have been studied, considering
continuous time finite state space mean field games with finite
horizon. The authors in \cite{GMS13} consider uniformly convex cost
functions and in \cite{Gueant2014} cost functions are assumed to be
strictly convex. In our model, we assume that the costs are continuous
in the population distribution.  It can also be observed that the
instantaneous cost of Player~0 is linear in $\pi^0$. Therefore, the
model we study in this work is not covered by these papers.

We define the notion of mean field equilibrium for the finite horizon
case as in the discounted case, by replacing the cost function
\eqref{eq:V(pi0,pi)} by \eqref{eq:finite_T}.  Then, the proof of the
existence Theorem~\ref{thm:mfe-existence} applies \emph{mutatis
  mutandis} to show the existence of a mean field equilibrium in this
case: Any continuous time mean field game over a finite horizon that
satisfies Assumption~(A1) has a mean field equilibrium.

\subsection{Convergence to a Mean Field Equilibrium}

The construction of a counter example of convergence with an infinite
time horizon given in \textsection \ref{sec:ce}  cannot be directly adapted to the finite horizon case.
In the finite-horizon version of  the game defined  in  \textsection  \ref{sec:ce}, the strategy $\pi^N$ is not a Nash
equilibrium for the $N$-player game  because at the last time-slot, the
best response of Player~$n$ to any strategy is to play $D$. By induction
on the number of time-slots, the only Nash equilibrium of the $N$-player game is when all
players play $D$, which coincides with the mean field equilibrium.

Yet, a counter-example also exists for finite-time horizon. 
The essential idea is to start with a matrix game with two pure Nash equilibria instead of one as in the previous example.
 Let us
consider the following cost matrix:
\begin{center}
  \begin{tabular}{|c|c|c|c|}
    \hline
    &   C   &  D   & P \\\hline
    C&  1,1 &  3,0 & 4,0\\\hline
    D&  0,3 &  2,2 & 4,3\\\hline
    P&  0,4 &  3,4 & 3,3\\\hline
  \end{tabular}
\end{center}
The setting is similar to the previous example: the action set is
equal to the state state $\calE=\calA=\{C,D,P\}$ and at each time
step, one player is chosen. If she selects an action $a\in\calA$, then
her state becomes $a$ at the next time step. 
This game can be viewed as a generalization of the prisoner's dilemma
with an additional Nash-equilibrium $P$ (which stands for
``punish''). It can be shown using a similar path as in the previous
section that, when $T$ is large enough, the following time-dependent
Markovian\footnote{When the time horizon is finite, it is natural to
  consider Markovian strategies that depend on time.} strategy is a
Nash equilibrium:
\begin{align}
  \label{eq:pi_finite}
  \pi^N(m,t) = \left\{
  \begin{array}{ll}
    C & \text{ if $t<1$ and $m_c=1$;}\\
    D & \text{ if $t\ge1$ and $m_P=0$;}\\
    P & \text{ otherwise.}
  \end{array}
  \right. 
\end{align}
In the above strategy, the state $P$ is used as a stick to punish
people from deviating from the imposed strategy. In this case, nobody
has an incentive to deviate from this strategy at the last step
because $D$ is also a Nash equilibrium.

The mean field game has only two equilibria: The whole population
always plays $D$, or the whole population always plays $P$. These
equilibria are also equilibria for the finite-game. Yet, they both
have a larger cost than the strategy of
Equation~\eqref{eq:pi_finite}. This leads us to say that the value of
the game does not converge: the asymptotic cost of the
strategy~\eqref{eq:pi_finite} is strictly smaller than the cost of any
of the mean field equilibria.


\section{Synchronous Players}
\label{sec:discrete-time}

As explained in the previous section, mean field games in continuous
time appear naturally as the limit of $N$-player asynchronous games as
$N$ goes to infinity.  In these asynchronous games with $N$ players,
only one player changes state at the same time. However, there are
other situations where it is more natural to consider synchronous
games in which, at each time step, all players take an action.

\subsection{Synchronous $N$-Player Games with Exchangeable Players}

Here we consider a finite synchronous game
${\cal G}_s^{N} = (N,\calE,\calA,\{P_a\},\mathbf{M}_0,\{c_a\},\beta)$
with $N$ identical players with several differences from the model
used in Section~\ref{sec:continuous_N}, the main one being the
replacement of the rate matrices by stochastic matrices.  As before,
each Player~$n$ has an internal state $X_n(t)$ that belongs to a
finite state space $\calE$
($\mathbf{X}(t) = (X_0(t),\ldots,X_{N-1}(t)$) and chooses an action
from a finite action space $\calA$. The main difference with the
previous asynchronous model is that at each time step $t\in\Z^+$, all
players choose an action $A_n(t)\in\calA$ simultaneously.  We assume
that, a player in state $i$ who chooses action $a$ goes to state $j$
with probability $P_{ija}(\bX(t))$ and that, given $\bX(t)$, the
evolution of all players are independent.  Furthermore, we assume that
the players are {\it exchangeable}, {\it i.e.} for any permutation
$\sigma$ of the $N$ players,
$P_{ija}(X_0(t),\ldots,X_{N-1}(t)) =
P_{ija}(X_{\sigma(0)}(t),\ldots,X_{\sigma(N-1)}(t))$. The fact that
all players are exchangeable implies that the dependence in
$\mathbf{X}(t)$ can be replaced by a dependence on the population
distribution $\mathbf{M}(t)$.  More precisely, for any vector state
$\mathbf{x},\mathbf{y}\in\calE^N$ and any action vector
$\mathbf{a}\in\calA^N$, one can write:
\begin{equation}
  \proba{\mathbf{X}(t+1)=\mathbf{j} \middle| \mathbf{X}(t)=\mathbf{i} , \mathbf{A}(t)=\mathbf{a} ,\calF_t} = \prod_{n=1}^N P_{i_nj_na_n}(\mathbf{M}(t)),
  \label{eq:R_N}
\end{equation}
where  $\calF_t$ is the natural filtration of the game up to time $t$,
$\mathbf{m}$ is  the population distribution   of $\mathbf{x}$ and  $\forall i,j  \in \calE, \forall a \in \calA, P_{ija}(\mathbf{m})$ forms a stochastic matrix, continuous in $\mathbf{m}$.

The  instantaneous cost at time $t$  depends on actions
and state at time $t-1$,  symmetric in
all players, so it can be written as a function of the population distribution: $c_{X_n(t),A_n(t)}(\mathbf{M}(t))$,
and a discount factor $\delta$ at each time step.  Given a
strategy $\pi^0$ used by Player~0 and a strategy $\pi$ used by all
the others, the expected cost of Player~$n$ is:
\begin{equation}
  V^N(\pi^0,\pi) = \expect{(1-\delta) \sum_{t=0}^\infty \delta^tc_{X_0(t),A_0(t)}(\mathbf{M}^{\pi}(t))\middle| %
    \begin{array}{l}
      \text{$A_0$ is chosen w.r.t.  $\pi^0$}\\
      \text{$A_{n'}$ is chosen w.r.t. $\pi$ if $n'\ne 0$}
    \end{array}
  }.
  \label{eq:V_N_discret}
\end{equation}

\subsection{Corresponding Mean field Game}
\label{sec:discrete_time_mean_field_discounted}

Synchronous games also admit mean field game limits.
To construct this limit, let us consider a strategy $\pi$ such that $\pi_{i,a}(\mathbf{m})$ is the
probability for a player to choose action $a$ given that she is in
state $i$ and that $\mathbf{M}(t)=\mathbf{m}$. Assume that $\mathbf{M}(0)$ converges in
probability to some $\mathbf{m}(0)$ as $N$ goes to infinity and that all players
except Player~0 apply a strategy $\pi$ that is continuous in $\mathbf{m}$. As
shown in Theorem 1 in \cite{gast2011mean} (up to differences in notations,
the mean field model in \cite{gast2011mean} is the same as Equation \eqref{eq:R_N}), the population distribution $\mathbf{M}^\pi(t)$
converges (in probability) to a deterministic quantity $\mathbf{m}^\pi(t)$ as
$N$ goes to infinity. $\mathbf{m}^\pi(t)$ is defined by
\begin{equation}
  m^{\pi}_j(t+1)=\sum_{i\in\calE}\sum_{a\in\calA}{m^{\pi}_i(t) P_{i,j,a}(\mathbf{m}^{\pi}(t)) \pi_{i,a}(\mathbf{m}(t))}.
  \label{eq:emp-measure-discrete}
\end{equation}
We denote by ${\pi}^0$ the strategy of Player~0. The
probability that Player~0 is in state $j\in\calE$ evolves over time
according to the following equation:
\begin{equation}
x_j(t+1)=\sum_{i\in\calE}\sum_{a\in\calA}{x_i(t) P_{i,j,a}(\mathbf{m}^{\pi}(t)) \pi^0_{i,a}(\mathbf{m}(t))}.
\label{eq:proba-state-discrete}
\end{equation}

In this case, the cost of Player~0, given by \eqref{eq:V_N_discret} becomes
\begin{align*}
  V({\pi}^0,\pi)=(1-\delta) \sum_{t=0}^{\infty}\sum_{i\in\calE}\sum_{a\in\calA}{ \delta^{t}x_i(t) c_{i,a}(\mathbf{m}^{\pi}(t)) \pi^0_{i,a}(\mathbf{m}(t))}.
\end{align*}

As the evolution of $m$ is deterministic, for any closed loop strategy
$\pi_{i,a}(\mathbf{m}(t))$ and any initial condition $\mathbf{m}(0)$,
there exists an open-loop strategy $\pi_{i,a}(t)$ that leads to the
same values for $\mathbf{m}^{\pi}(t)$ and the same cost. Hence, for
the mean field model, one can replace any state-dependent strategy
$\pi(\mathbf{m}(t))$ in the above equations by a time-dependent
strategy $\pi(t)$.

Player~0 chooses the strategy that minimizes her expected cost. When Player~0 does so, we say
it uses the best response to the mass strategy $\pi$.
\begin{equation*}
  BR(\pi)=\argmin_{\pi^0}V(\pi^0,\pi).
\end{equation*}

A strategy is said to be a mean field equilibrium if it is a fixed point
for the best response function, that is,
\begin{equation*}
  \pi^{MFE}\in BR(\pi^{MFE}).
\end{equation*}

One of the difficulties of the analysis of continuous time mean field
game is that the elements under consideration (the population distribution,
the population strategy, Player~0 strategy...)  are continuous functions
of time. In the discrete time case, the model gets significantly
simplified since all the elements are vectors. Hence, the proof of the
existence of a mean field equilibrium for continuous-time mean field
game (Theorem~\ref{thm:mfe-existence}) can be adapted to show that the
following result.

\begin{theorem}[Mean Field Equilibrium Existence for Synchronous Games]
Any synchronous  mean field game with discounted cost that satisfies
Assumption (A1) for $P$  and $c$ respectively, has a mean field equilibrium.
\label{thm:mfe-existence-discrete-discounted}
\end{theorem}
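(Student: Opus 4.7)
The plan is to mimic the proof of Theorem~\ref{thm:mfe-existence}, with the crucial simplification that no ODE is involved: Equation~\eqref{eq:emp-measure-discrete} directly defines $\bbm^{\pi}$ by an explicit polynomial recursion from $\bbm_0$. As noted in Section~\ref{sec:discrete_time_mean_field_discounted}, for the mean field model it suffices to work with open-loop strategies, so I would set $\strat := (\calP(\calA)^{\calE})^{\N}$ and $\calM := \calP(\calE)^{\N}$, both equipped with the product topology. Since each factor is a compact convex metrizable simplex, Tychonoff's theorem makes $\strat$ and $\calM$ compact, convex and metrizable.

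First I would check joint continuity of the relevant maps. Iterating~\eqref{eq:proba-state-discrete} and using continuity of $\bbm\mapsto P_{ija}(\bbm)$, each coordinate $x_i^{\pi^0,\bbm}(t)$ is a polynomial in the finitely many coordinates $(\pi^0(s),\bbm(s))_{s<t}$, hence jointly continuous in $(\pi^0,\bbm)$ for the product topology. Since $c_{i,a}$ is bounded on the compact $\calP(\calE)$, the summable envelope $\delta^t$ makes
\begin{equation*}
  W(\pi^0,\bbm) := (1-\delta)\sum_{t=0}^\infty \delta^t \sum_{i\in\calE,a\in\calA} x_i^{\pi^0,\bbm}(t)\, c_{i,a}(\bbm(t))\, \pi^0_{i,a}(t)
\end{equation*}
continuous on $\strat\times\calM$ by dominated convergence. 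The same induction shows that $\pi\mapsto\bbm^\pi$ is continuous. I would then define, in parallel to~\eqref{def:Phi},
\begin{equation*}
  \Phi(\bbm) := \bigl\{\bbx^{\pi^0,\bbm} : \pi^0\in\argmin_{\pi\in\strat} W(\pi,\bbm)\bigr\},
\end{equation*}
and verify that $\Phi(\bbm)$ is non-empty and compact (continuity of $W$ on the compact $\strat$), convex (as in the continuous case, set $z_{i,a}(t):=x_i(t)\pi^0_{i,a}(t)$; both the dynamics and the cost become linear in $\bbz$, so the set of optimal $\bbx$ is the image of a convex compact polytope under a linear map), and upper-semi-continuous (close the graph: given $\bbm_n\to\bbm_\infty$ and $\bbx_n\in\Phi(\bbm_n)$ with $\bbx_n\to\bbx_\infty$, extract a subsequence of optimal $\pi_n$ that converges to some $\pi_*\in\strat$ by compactness, and use joint continuity of $W$ to conclude that $\pi_*$ is optimal against $\bbm_\infty$ with $\bbx^{\pi_*,\bbm_\infty}=\bbx_\infty$).

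Finally I would apply the Kakutani--Fan--Glicksberg fixed point theorem to $\Phi$ on the compact convex metric space $\calM$, producing $\bbm^*\in\Phi(\bbm^*)$; any $\pi^*$ achieving the argmin defining $\Phi(\bbm^*)$ is then a mean field equilibrium. I expect no serious obstacle: the delicate points of the continuous-time proof (well-posedness of the ODE for measurable $\pi$, Arzela--Ascoli compactness via an exponentially weighted norm, continuity of $\pi\mapsto\bbm^\pi$ for the weak topology) all vanish because in discrete time the dynamics are polynomial in $(\pi,\bbm)$ and pointwise convergence on the countable index set $\N$ is the natural notion of convergence. The only mildly delicate step is upper-semi-continuity of $\Phi$, which as above reduces to a straightforward diagonal extraction together with the joint continuity of $W$.
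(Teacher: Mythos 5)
Your proposal is correct and follows essentially the same route as the paper, whose own proof is only a two-line sketch deferring to the continuous-time argument of Theorem~\ref{thm:mfe-existence}: compactness and convexity of the set of open-loop policies, the occupation-measure reformulation $z_{i,a}(t)=x_i(t)\pi^0_{i,a}(t)$ to get convexity of $\Phi(\bbm)$, a closed graph, and a Kakutani-type fixed point in $\bbm$ rather than in the (non-convex-valued) best-response map. The only nit is your last line: not \emph{any} minimizer $\pi^*$ of $W(\cdot,\bbm^*)$ is an equilibrium, only one whose induced distribution $\bbx^{\pi^*,\bbm^*}$ equals $\bbm^*$ (which the fixed point guarantees to exist, and which forces $\bbm^{\pi^*}=\bbm^*$ by uniqueness of the recursion).
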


\begin{proof}[Sketch of proof]
  We first observe that the set of discrete-time open-loop policies is
  a compact and convex set. Thus, to finish the proof, we need to show
  that the best response function has a closed graph and it is
  convex. The former condition is true since the set of open-loop
  policies belongs to a finite dimensional space and from the
  continuity assumptions (A1). The last condition can be shown using
  the same arguments as in the proof of
  Theorem~\ref{thm:mfe-existence}.
\end{proof}

\subsubsection{An Important Special Case: Repeated Games}
\label{sec:repeat}

The classical repeated games with discounted costs and with identical
players form a subclass of synchronous games, as defined here.
To see this, let us first consider a static  $N$-player matrix game $G$ with
symmetric cost: $u(a_1,\ldots,a_N)$ is the instantaneous cost of any player when
the players use actions $a_1,\ldots,a_N$ respectively.
Furthermore, we assume that $u(a_1,\ldots,a_N) =
u(a_{\sigma_1},\ldots,a_{\sigma_N})$, for any permutation $\sigma$ of
$\{1,\dots,N\}$.
The players repeat the matrix game infinitely often and their
cost under strategy $\pi^1,\cdots,\pi^N$ is the
discounted sum of the costs:
\begin{equation}
  V^N(\pi^1,\pi^2,\cdots,\pi^N) = (1-\delta)\sum_{t=0}^\infty \delta^{ t}
  u(\pi^1(t),\pi^2(t),\cdots,\pi^N(t)).
\label{eq:repeatCost}
\end{equation}

These games fit in our framework: The state of a player is merely her
current action ($\mathbf{X}(t) = \mathbf{A}(t)$) and the evolution of
the state becomes trivial: Under state $x=a$ and selecting action $b$,
the next state does not depend on the other players and  becomes $b$
with probability  one:  $P_{ab}(b,\mathbf{M}(t)) = 1$.
The cost of one player at each stage corresponds to an
immediate cost  $c_{X_n(t),A_n(t)}(\mathbf{M}(t)) = u
(\mathbf{X}(t))$ since the cost $u$ only depends on the population
distribution by symmetry.
As for the total cost of a player,  
\eqref{eq:repeatCost} coincides with \eqref{eq:V_N_discret}, as long
as all players in the same state use the same strategy.

\subsection{The Folk Theorem Does Not Scale}

The relation between equilibria of $N$-player games with their mean
field limits is also complex in the discrete time case.

Let us first focus on results that concern the performance of mean
field equilibria in the $N$-player game.  The situation is almost
similar to the continuous time case and resembles Theorem
\ref{th:coonv1} (i) in the sense that if $\pi$ is a mean field
equilibrium, then under assumption (A1), there exists $N_0$ such that
for all $N\ge N_0$, $\pi$ is a local $\varepsilon$-equilibrium of the
$N$-player game.  The proof of this is essentially similar to the
proof of Theorem \ref{th:coonv1}.

Let us now consider the Nash equilibria of the $N$-player game.
The situation is very different from the continuous time case because
the state of all the players can change in one time unit in the discrete
time  while in 
continuous time, 
state can only change in small steps, one player at a time.

This has several consequences on the nature of equilibria under both 
models.
As mentioned before,  the Nash equilibria in the continuous time case
may depend on the initial population distribution, but this is not the
case here, so that there is more latitude for designing equilibria.

Let us consider the particular case of repeated games, introduced in
Section \ref{sec:repeat}.
For this type of games, the set of equilibria can be characterized 
using the Folk Theorem for repeated games.

\begin{theorem}[Folk theorem, adapted from Theorem~A in
  \cite{fudenberg1986folk})] Let $G$ be a symmetric matrix game,
  and let $V^*$ be the cost under the strategy that repeats the Nash
  equilibrium of the static game $G$.  Then for any
  compatible\footnote{In this context, a compatible cost is a cost
    that can be attained by at least one strategy. } cost $V$ smaller
  than $V^*$, there exists a discount factor $\delta\in(0,1)$ such
  that $V^*$ is the cost of an equilibrium of the discounted repeated
  game.
\end{theorem}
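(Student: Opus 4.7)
The plan is to prove the theorem by the standard grim-trigger construction from the theory of discounted repeated games, interpreting the conclusion as ``$V$ is the cost of an equilibrium'' (a cost of $V^*$ is of course trivially attained by the stage-Nash repeat). Because $V$ is compatible and strictly smaller than $V^*$, there exists by definition a strategy profile $\sigma^*$ (possibly time-varying and mixed) whose discounted cost equals $V$. Let $a^*$ denote an action profile realizing the static Nash equilibrium of $G$, whose infinite repetition gives cost $V^*$. The candidate equilibrium is the grim trigger: at every period each player plays the action prescribed by $\sigma^*$ provided every player has followed $\sigma^*$ in every previous period, and at the first observed deviation every player switches to $a^*$ forever. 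On the equilibrium path, each player incurs the cost $V$.

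The main step is to check that no single-period deviation is profitable, which by the one-shot deviation principle suffices for Nash equilibrium in a discounted repeated game with bounded stage costs. Fix Player~$n$ and a history on the equilibrium path. Let $c_{\min}$ denote the minimum of the stage cost $u$ over all pure action profiles (finite since the action set is finite). After a one-shot deviation at time $t$, play switches to $a^*$ forever, so Player~$n$'s total cost is bounded below by $(1-\delta)\,c_{\min} + \delta\,V^*$, namely the best possible instantaneous deviation gain plus the discounted Nash punishment. Comparing with the equilibrium-path cost $V$, non-deviation is preferred as soon as
\[
V \;\le\; (1-\delta)\,c_{\min} \;+\; \delta\,V^*,
\]
equivalently $\delta \ge (V - c_{\min})/(V^* - c_{\min})$. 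Since $V < V^*$, this lower bound is strictly smaller than $1$, so any $\delta$ chosen in this interval makes the grim trigger a Nash equilibrium whose cost is exactly $V$.

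The main obstacle, conceptually, is not the bound itself but the credibility of the threat and its uniformity across histories. Credibility of the punishment phase is automatic because $a^*$ is a stage Nash equilibrium: once everyone plays $a^*$, no player can gain by refusing to punish, so the continuation strategy is itself an equilibrium of every continuation subgame. Uniformity over time is obtained from the uniform boundedness of stage costs (finite, symmetric game), which bounds both the per-period deviation gain and, in the non-stationary case, the continuation costs $V_t$ along $\sigma^*$; replacing $V$ by $\sup_t V_t$ in the inequality above still keeps the required threshold for $\delta$ strictly below one, so a single discount factor suffices at all histories.
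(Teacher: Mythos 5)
The paper does not actually prove this statement: it is quoted as an adaptation of Theorem~A of \cite{fudenberg1986folk}, and the only argument the paper supplies is the explicit Nash-reversion construction appearing in its examples. Your grim-trigger proof is therefore the right genre of argument (it is the Nash-threat version of the Folk theorem, which suffices here since the punishment is reversion to the static Nash profile), the credibility discussion is correct, and the incentive computation $\delta \ge (V-c_{\min})/(V^*-c_{\min})$ is valid for a \emph{stationary} target path.

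The genuine gap is in your last paragraph. Uniform boundedness of the stage costs does not deliver the uniformity over histories that you claim. The incentive constraint at an on-path history with normalized continuation cost $V_t$ is $V_t \le (1-\delta)c_{\min} + \delta V^*$, and since $c_{\min}\le V^*$ the right-hand side is at most $V^*$ for \emph{every} $\delta\in(0,1)$. Hence if the target profile $\sigma^*$ has even one on-path continuation value $V_t > V^*$ --- perfectly possible for a non-stationary path whose overall discounted cost is $V < V^*$, e.g.\ one that front-loads the cheap periods --- the constraint fails at that history for every discount factor, and a one-shot deviation there (take the stage best response, then absorb the punishment worth only $V^*$) is strictly profitable. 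Replacing $V$ by $\sup_t V_t$ keeps the threshold below one only if $\sup_t V_t < V^*$, which you have not established and which is false in general. The standard repair, and what the cited construction actually does, is to realize $V$ by a stationary (or suitably ordered cyclic) path whose continuation values all remain at or below $V$, so that a single threshold for $\delta$ works at every history. A secondary issue, inherited from the loose statement of ``compatible,'' is that the discounted cost of a fixed $\sigma^*$ changes when you change $\delta$, so the profile ``with cost $V$'' must be re-selected for each candidate $\delta$; with a stationary target whose per-period cost is $V$ this is automatic, which is a further reason to insist on stationarity rather than an arbitrary compatible strategy.
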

 
Actually, for any $V < V^*$, the construction of an equilibrium whose
cost is $V$ is based on the ``tit for tat'' principle.  We claim that
none of these equilibria scale at the mean field limit.  Let us
consider the following example for a static game.  Each player only
has two strategies, $D$ and $C$.  If all players play $D$, the cost is
$-1$.  If all players play $C$, the cost is $-2$.  If some players
play $D$ and others play $C$, then, all the players who play $C$ get
$-2 M_C$ while the players who play $D$ get $-3 M_C - M_D$, where
$M_C$ and $M_D$ are the proportions of players playing $C$ and $D$
respectively. These costs correspond to the average costs obtained by
a player in a matching game against a random opponent.

The unique Nash equilibrium of the static game is strategy
$(D,D,\ldots, D)$.
The cost of the corresponding repeated game is 
$(1-\delta) \sum_t -\delta^t = -1$.

Let us now consider the following strategy (denoted $\pi^N$ in the
following) for all players: Play $D$ for $k$ rounds then play $C$ as
long as every-other player has followed the same pattern, else play
$D$ forever.  The cost of this strategy is between $-1$ and $-2$:
\[(1-\delta) (\sum_{t=0}^{k-1}  -\delta^t + \sum_{t=k}^\infty -2
\delta^t) =  -1-\delta^k.\]

The strategy $\pi^N$ is an equilibrium of the finite game if $\delta$
is large enough.  Indeed, no player wants to deviate in the first $k$
rounds, because her cost would increase: In the rounds after $k$, a
deviation provides an immediate cost advantage, at the cost of being
punished until the end of time, so that a larger enough $\delta$ makes
this non-profitable.

Let us now consider the mean field game setting. If the whole
population uses the strategy $\pi^N$ and if Player~0 uses the same
strategy her cost becomes
\begin{align*}
  V(\pi^N,\pi^N) &=(1-\delta)
                   \sum_{t=0}^{\infty}\sum_{i\in\calE}\sum_{a\in\calA}{
                   \delta^{t}x_i(t)
                   c_{i,a}(\mathbf{m}^{\pi}(t)
                   \pi^0_{i,a}(\mathbf{m}(t))}, \\
 &=(1-\delta) (\sum_{t=0}^{k-1}  -\delta^{t} + \sum_{t=k}^{\infty} -2
   \delta^{t}) \\
 &=  -1-\delta^k.
\end{align*}
However in the mean field setting, the best response of Player~0 to
$\pi^N$ is not $\pi^N$ but the strategy $\pi^D$ where she plays $D$
all the time.  Indeed in this case her total cost becomes
\begin{align*}
  V(\pi^D,\pi^N)
 &=(1-\delta) (\sum_{t=0}^{k-1}  -\delta^{t} + \sum_{t=k}^{\infty} -3
   \delta^{t}) \\
 &=  -1- 2 \delta^k.
\end{align*}
This shows that $\pi^N$ is not a mean field equilibrium and a ``free
rider'' player can take advantage of the fact that the population will
not act against her.

\subsection{Finite Horizon Case}

We now focus on the mean field games when objects evolve in discrete time 
time over a finite horizon,  $0$ to $T$. In this case, the population distribution $\mathbf{m}^{\pi}$ is defined 
by \eqref{eq:emp-measure-discrete}, which depends on the strategy of the 
mass $\pi$. We assume that Player~0 can choose her own strategy $\pi^0$. The
expected cost of Player~0 is
\begin{align*}
  V({\pi}^0,\pi)=\sum_{t=0}^{T}\sum_{i\in\calE}\sum_{a\in\calA}{x_i(t) c_{i,a}(\mathbf{m}^{\pi}(t)) \pi^0_{i,a}(\mathbf{m}(t))},
\end{align*}

\noindent where $x_i(t)$ is the probability that Player~0 is in state $i$ at time $t$. 
The evolution of $x_i(t)$ over time is described in \eqref{eq:proba-state-discrete}.

Player~0 uses best response to a given population strategy $\pi$,
which means that she selects the strategy $\pi^0$ that minimizes her
expected cost. We are interested in proving the existence of a mean
field equilibrium which consists of finding a strategy that is a
fixed-point for the best response function. In
Section~\ref{sec:discrete_time_mean_field_discounted}, we showed this
for the discounted case. In the finite horizon case, the vectors have
finite size and, as a consequence, it is immediate to show, using the
same arguments of those required for the proof of
Theorem~\ref{thm:mfe-existence-discrete-discounted}, that any discrete
time mean field game with finite horizon cost such that $P$ and $c$
satisfy Assumption $(A1)$ has a mean field equilibrium.
Again, the proof mimics the proof of the analog Theorem
\ref{thm:mfe-existence} in continuous time over a finite horizon.


%
%


\section{Conclusions}
\label{sec:conclusions}

In this article, we generalize the framework of discrete-space mean
field games to the cases of non-convex costs and explicit
interactions. They hit a good compromise between tractability
(existence of an equilibria) and modelization power (including
propagation and congestion behaviors). This model consists of a finite
state space mean field game where the transition rates of the objects
and the cost function of a generic object depend not only on the
actions taken but also on the population distribution. We also show
that there exists a sub-class of Nash equilibria for $N$-player games
that converge to mean field equilibria when the number of players goes
to infinity.  Outside of this class, and in particular for all
equilibria using the ``tit for tat'' principle over which the Folk
theorem is based, the convergence does not hold.

For future work, we are interested in finding conditions ensuring the
uniqueness of the mean field equilibrium. We believe that monotony
assumptions similar to assumptions in \cite{GMS13} are required to
prove the existence of a unique mean field equilibrium in this model.
On the other hand, another interesting open question concerns the
convergence of $N$-players equilibria to mean field equilibria when
the number of player grows. We believe that there exist many
$N$-player games for which the only limiting equilibria are mean field
equilibrium, for example when players have incomplete information about the game. It would  be interesting to characterize the
sub-class of strategies where  convergence to mean field equilibria
holds. Obviously, this class includes all local
strategies (no information) and excludes some Markovian ones (full information).

\bibliographystyle{abbrv}
\bibliography{bibfile}

\end{document}